\numberwithin{equation}{section}
\newcommand{\cF}{\mathcal{F}}
\newcommand{\cH}{\mathcal{H}}
\newcommand{\bR}{\mathbb{R}}
\newcommand{\ra}{\rightarrow}
\newcommand{\qand}{\quad \textrm{and} \quad}
\def\acts{\curvearrowright}
\newcommand\subsetsim{\mathrel{%
\ooalign{\raise0.2ex\hbox{$\subset$}\cr\hidewidth\raise-0.8ex\hbox{\scalebox{0.9}{$\sim$}}\hidewidth\cr}}}
\DeclareMathOperator{\supp}{supp}
\DeclareMathOperator{\linspan}{span}
\theoremstyle{theorem}
\newtheorem{theorem}{Theorem}[section]
\newtheorem{corollary}[theorem]{Corollary}
\newtheorem{lemma}[theorem]{Lemma}
\theoremstyle{definition}
\newtheorem{remark}{Remark}[section]
\begin{document}

\title{Random Walks on Countable Groups}

\author{Michael Bj\"orklund}
\address{Department of Mathematics, Chalmers, Gothenburg, Sweden}
\email{micbjo@chalmers.se}
\thanks{}

\keywords{}

\subjclass[2010]{Primary 22D40: ; Secondary: 05C81, 11B13}

\date{}

\dedicatory{}

\begin{abstract}
We begin by giving
a new proof of the equivalence between the Liouville property and vanishing of the drift for symmetric 
random walks with finite first moments on finitely generated groups; a result which was first established by Kaimanovich-Vershik and Karlsson-Ledrappier. 
We then proceed to prove that the product of the Poisson boundary of any countable measured group $(G,\mu)$
with any ergodic $(G,\check{\mu})$-space is still ergodic, which in particular yields a new proof of weak mixing 
for the double Poisson boundary of $(G,\mu)$ when $\mu$ is symmetric. Finally, we characterize the failure of weak-mixing for an ergodic $(G,\mu)$-space as the existence of a non-trivial measure-preserving isometric factor.
\end{abstract}

\maketitle

\section{Measured groups and their Poisson boundaries}

A \emph{measured group} is a pair $(G,\mu)$, where $G$ is a countable group and $\mu$ is a probability measure
on $G$ whose support generates $G$ as a semi-group. We say that $\mu$ is \emph{symmetric} if the 
\emph{adjoint} probability
measure $\check{\mu}(s) := \mu(s^{-1})$ coincides with $\mu$. We write $\mu^{0} = \delta_e$ and 
\[
\mu^{*n}(s) = \sum \mu(s_1) \cdots \mu(s_n), \quad \textrm{for $s \in G$ and $n \geq 1$},
\]
where the sum is taken over all $n$-tuples $(s_1,\ldots,s_n)$ in $G^n$ such that $s_1 \cdots s_n = s$. A real-valued function $u$ on $G$ is \emph{$\mu$-harmonic} if 
\[
\sum_{s} u(gs) \mu(s) = u(g), \quad \textrm{for all $g \in G$}.
\]
Let $\ell^\infty(G)$ denote the Banach space of real-valued \emph{bounded} functions on $G$ endowed with the 
supremum norm, and let $H^\infty(G,\mu)$ denote the closed sub-space of $\ell^\infty(G)$ consisting of 
bounded $\mu$-harmonic functions. We say that $(G,\mu)$ is \emph{Liouville} if $H^\infty(G,\mu)$ consists 
only of constant functions. 

Let $(X,\nu)$ be a Borel probability measure space, and denote by $L^\infty(X,\nu)$ the Banach space of 
$\nu$-essentially bounded real-valued functions on $X$, identified up to $\nu$-null sets, endowed with the (essential) 
supremum norm. Suppose that $X$ is equipped
with an action of $G$ by measurable maps, which preserve the class of 
$\nu$-null sets in $X$, and whose inverses are also measurable.  We say the action
is \emph{ergodic} if whenever $B \subset X$ is a Borel set such that $\nu(B \Delta g \cdot B) = 0$ for all $g \in G$, 
where $\Delta$ denotes the symmetric difference of sets, then $B$ is either a $\nu$-null set or a $\nu$-conull set. 
We say that $\nu$ is \emph{$\mu$-stationary}, and that $(X,\nu)$ is a \emph{$(G,\mu)$-space}, if 
\[
\sum_{s \in G} \Big( \int_X f(s x) \, d\nu(x) \Big) \mu(s) = \int_X f(x) \, d\nu(x), 
\quad \textrm{for all $f \in L^\infty(X,\nu)$}.
\]

We note that if $(X,\nu)$ is a $(G,u)$-space and $f \in L^\infty(X,\nu)$, then the image of the bounded 
linear map $P_\nu : L^\infty(X,\nu) \ra \ell^\infty(G)$ defined by
\[
P_\nu f(s) := \int_X f(s x) \, d\nu(x), \quad \textrm{for $s \in G$ and $f \in L^\infty(X,\nu)$},
\]
is contained in $H^\infty(G,\mu)$. 

\begin{theorem}[Furstenberg \cite{Fu71}]
For every measured group $(G,\mu)$ there exists an ergodic $(G,\mu)$-space $(Z,m)$ such that the bounded
linear map $P_m : L^\infty(Z,m) \ra H^\infty(G,\mu)$ defined above is an isometric isomorphism of Banach spaces. 

In particular, $(G,\mu)$ is Liouville if and only if $(Z,m)$ is trivial, i.e. if the support of $m$ consists of one point, which happens if and only if $m$ is $G$-invariant.
\end{theorem}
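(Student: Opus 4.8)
The plan is to realize $(Z,m)$ as the Poisson boundary of the $\mu$-random walk and to reduce the desired isometric isomorphism to the martingale convergence theorem. First I would build the trajectory space: let $\Omega \subseteq G^{\bN_0}$ be the space of sample paths $\omega = (x_n(\omega))_{n\ge 0}$ of the walk, carrying for each $g$ the Markov measure $\bP_g$ of the walk started at $g$ (so $x_0 = g$ $\bP_g$-a.s.\ and the increments $x_n^{-1}x_{n+1}$ are i.i.d.\ with law $\mu$), and let $\cF_n = \sigma(x_0,\ldots,x_n)$. The diagonal left action $(g\omega)_n = g\,x_n(\omega)$ sends $\bP_e$ to $\bP_g$. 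The engine of the whole argument is the elementary remark that a bounded $u$ is $\mu$-harmonic if and only if $\big(u(x_n)\big)_{n\ge0}$ is a bounded $\cF_n$-martingale under every $\bP_g$, which is immediate from $\bE_g\big[u(x_{n+1})\mid\cF_n\big] = \sum_s u(x_n s)\mu(s)$.

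Next I would define the boundary. Let $\cT = \bigcap_n \sigma(x_n,x_{n+1},\ldots)$ be the tail $\sigma$-algebra and let $\pi:(\Omega,\bP_e)\ra Z$ be the associated measure-space quotient, with $m = \pi_*\bP_e$. The diagonal action preserves $\cT$ and is equivariant under $\pi$, so it descends to $Z$; setting $\nu_g = \pi_*\bP_g = g_*m$ and conditioning on the first increment yields exactly the stationarity identity $m = \sum_s \mu(s)\,s_*m$, so $(Z,m)$ is a $(G,\mu)$-space. This is where I expect the only genuinely measure-theoretic work: one must check that, because $\supp\mu$ generates $G$ as a semigroup, the harmonic measures $\nu_g$ are mutually equivalent (so the $G$-action is nonsingular and $Z$ is a bona fide Lebesgue $G$-space), and one must identify $L^\infty(Z,m)$ with the $\cT$-measurable functions. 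Everything after this point is soft.

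With this in place the isomorphism is pure martingale theory. For surjectivity, given $u\in H^\infty(G,\mu)$ the bounded martingale $u(x_n)$ converges $\bP_e$-a.s.\ and in $L^1$ to a $\cT$-measurable limit, hence to $f\circ\pi$ for some $f\in L^\infty(Z,m)$; since $\bE_g[u(x_n)] = u(g)$ for every $n$, dominated convergence gives $P_m f(g) = \bE_g[f\circ\pi] = u(g)$, so $P_m$ is onto $H^\infty(G,\mu)$. For injectivity and the isometry I would run the martingale in the other direction: the Markov property gives $P_m f(x_n) = \bE_e[f\circ\pi\mid\cF_n]$, which by L\'evy's upward theorem converges a.s.\ to $f\circ\pi$; hence $|f\circ\pi|\le \|P_m f\|_\infty$ $\bP_e$-a.s., so $\|f\|_{L^\infty(m)}\le \|P_m f\|_\infty$, the reverse inequality being the obvious bound $|P_m f(g)|\le\|f\|_\infty$.

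Finally, ergodicity and the Liouville dichotomy fall out for free. If $B\subseteq Z$ satisfies $m(B\Delta gB)=0$ for all $g$, then $P_m\mathbf{1}_B(g) = m(g^{-1}B) = m(B)$ is constant, so injectivity of $P_m$ forces $\mathbf{1}_B = m(B)$ $m$-a.e.\ and $m(B)\in\{0,1\}$. For the last assertion, $(G,\mu)$ is Liouville iff $H^\infty(G,\mu)$, hence $L^\infty(Z,m)$, reduces to the constants, which says precisely that $m$ is a point mass; and stationarity of $\delta_{z_0}$ forces $sz_0 = z_0$ for all $s\in\supp\mu$, hence $gz_0 = z_0$ for all $g\in G$ since $\supp\mu$ generates $G$, so triviality of $(Z,m)$ is equivalent to $G$-invariance of $m$. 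The main obstacle is the boundary construction of the second paragraph; once $(Z,m)$ is in hand, the isomorphism is martingale convergence and the ergodicity is a one-line consequence of injectivity.
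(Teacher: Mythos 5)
A preliminary remark: the paper does not prove this statement---it is quoted from Furstenberg \cite{Fu71}---so your proposal can only be compared with the classical construction. Your overall architecture (path space, the dictionary between bounded $\mu$-harmonic functions and bounded martingales, L\'evy's upward theorem for the isometry, ergodicity from injectivity of $P_m$) is exactly that classical proof. However, there is one genuine error: you take $Z$ to be the quotient by the \emph{tail} $\sigma$-algebra $\cT = \bigcap_n \sigma(x_n,x_{n+1},\ldots)$, whereas the Poisson boundary is the quotient by the $\sigma$-algebra $\cI$ of \emph{shift-invariant} events, $\cI = \{A : T^{-1}A = A\}$, where $T$ is the time shift. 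One has $\cI \subseteq \cT$, and the two agree modulo null sets for aperiodic walks, but the theorem concerns arbitrary $\mu$ (and symmetric measures with $\mu(e)=0$, the case the paper cares most about, are typically $2$-periodic, so this is not an edge case). Concretely, take $G = \bZ$, $\mu(\pm 1) = 1/2$, and $A = \{x_n \equiv n \pmod 2 \textrm{ for all } n\} \in \cT$. Then $\bP_0(A) = 1$ while $\bP_1(A) = \bP_{-1}(A) = 0$: the measures $\bP_g$ restricted to $\cT$ are mutually \emph{singular}, so the harmonic measures $\nu_g$ are not equivalent, the $G$-action on $(Z,m)$ is not nonsingular, and $P_m$ is not even well defined on $L^\infty(Z,m)$; the ``genuinely measure-theoretic work'' you defer is not merely unproved, it is false for $\cT$. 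Moreover $T^{-1}A = A^c$ modulo null sets, so $T_*\bP_0 = \sum_s \mu(s)\bP_s$ disagrees with $\bP_0$ on $\cT$: your claim that ``conditioning on the first increment yields the stationarity identity'' silently uses $\pi \circ T = \pi$, which is precisely the property of $\cI$ that $\cT$ lacks. For the same reason your key identity $P_m f(x_n) = \bE_e[f\circ\pi \mid \cF_n]$ fails for merely tail-measurable $f\circ\pi$: the Markov property only gives $\bE_e[f\circ\pi\mid\cF_n] = h_n(x_n)$ with $h_n$ depending on $n$ (bounded tail functions correspond to \emph{space-time} harmonic functions, not to harmonic ones).

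The repair is standard and leaves the rest of your argument intact: run everything with $\cI$ in place of $\cT$. Then (i) $\pi \circ T = \pi$, so $T_*\bP_e = \sum_s \mu(s)\bP_s$ pushes forward to the stationarity identity $m = \sum_s \mu(s)\, s_*m$; (ii) restricting the iterate $T^n_*\bP_e = \sum_s \mu^{*n}(s)\bP_s$ to $\cI$ gives $\bP_e|_{\cI} \geq \mu^{*n}(s)\,\bP_s|_{\cI}$, hence $\bP_s|_{\cI} \ll \bP_e|_{\cI}$ for every $s$ by semigroup generation of $\supp\mu$, and applying the automorphism $\omega \mapsto g\omega$ (which preserves $\cI$, since the diagonal action commutes with $T$) to the relation $\bP_{g^{-1}}|_{\cI} \ll \bP_e|_{\cI}$ yields $\bP_e|_{\cI} \ll \bP_g|_{\cI}$; thus all $\nu_g$ are mutually equivalent and $P_m$ is well defined; (iii) for shift-invariant $F = f\circ\pi$ one has $F = F\circ T^n$, so the Markov property does give $\bE_e[F \mid \cF_n] = P_m f(x_n)$, which is what your L\'evy argument needs; and (iv) the martingale limit $\lim_n u(x_n)$ of a bounded harmonic $u$ is genuinely shift-invariant, not just tail-measurable, so your surjectivity argument lands in $L^\infty(Z,m)$ as required. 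With this single substitution your proof is correct and is the classical one; as written, with $\cT$, the constructed $(Z,m)$ is not a $(G,\mu)$-space at all.
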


\begin{remark}
Furthermore, up to $G$-equivariant measurable isomorphisms, $(Z,m)$ is uniquely determined, and we 
shall refer to any representative of $(Z,m)$ as the \emph{Poisson boundary} of $(G,\mu)$.
\end{remark}

Let $L^1(Z,m)$ denote the Banach space of $\nu$-integrable functions on $Z$, identified up to null sets,
endowed with the $L^1$-norm. Using Hahn-Banach's Theorem and the fact that $L^1(Z,m)^* \cong L^\infty(Z,m)$, 
we get the following reformulation of Furstenberg's Theorem:
\begin{equation}
\label{corfur}
\overline{\linspan} \big\{ \frac{ds_*m}{dm} \, : \, s \in G \big\} = L^1(Z,m).
\end{equation}

\section{Zero drift vs. Liouville}
Suppose that $\rho$ is a \emph{semi-norm} on $G$, i.e. $\rho$ is a non-negative function on $G$ such that 
\begin{equation}
\label{norm}
\rho(e) = 0 \qand \rho(s) = \rho(s^{-1}) \qand \rho(st) \leq \rho(s) + \rho(t), \quad \textrm{for all $s, t \in G$}.
\end{equation}
If, in addition, $\rho(s) = 0$ implies that $s = e$, then we say that $\rho$ is a \emph{norm} on $G$. 
For instance, if $G$ is finitely generated and $S \subset G$ is a finite generating set with $S^{-1} = S$, then 
\[
\rho_S(s) = \inf\{ n \geq 1 \, : \, s \in S^n \big\}, \quad \textrm{for $s \in G$},
\] 
is a norm, often referred to as the \emph{word-norm} associated to $S$. Given a semi-norm $\rho$ on $G$, we define the \emph{drift} $\ell_\rho(\mu)$ of the triple $(G,\mu,\rho)$ by
\[
\ell_\rho(\mu) = \lim_n \frac{1}{n} \sum_{s} \rho(s) \, \mu^{*n}(s).
\]
The limit exists by sub-additivity and is finite if $\rho$ is $\mu$-integrable. The aim of this section is to give
a new proof of the following theorem:
\begin{theorem}
\label{thm1}
Let $(G,\mu)$ be a finitely generated symmetric measured group and let $\rho$ be a word-norm. Suppose that $\rho$ is $\mu$-integrable. Then $(G,\mu)$ is Liouville if and only if $\ell_{\rho}(\mu) = 0$.
\end{theorem}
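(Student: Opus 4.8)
The plan is to transport both notions---Liouville and zero drift---onto the Poisson boundary $(Z,m)$ furnished by Furstenberg's Theorem, and to compare them there. For $g \in G$ set $\phi_g := \frac{dg_* m}{dm} \in L^1(Z,m)$. Stationarity is exactly the statement $\sum_s \mu(s)\,\phi_s = 1$ in $L^1(Z,m)$, and the chain rule gives the cocycle identity $\phi_{gh}(\xi) = \phi_g(\xi)\,\phi_h(g^{-1}\xi)$; together these show that $g \mapsto \phi_g$ is an $L^1(Z,m)$-valued $\mu$-harmonic function with $\|\phi_g\|_{L^1(m)} = 1$. Since the map $P_m$ of Furstenberg's Theorem is an \emph{isometric} isomorphism onto $H^\infty(G,\mu)$, writing any $u \in H^\infty(G,\mu)$ as $P_m f$ with $\|f\|_\infty = \|u\|_\infty$ and using $L^1$--$L^\infty$ duality yields the identity
\[
\sup\{\,|u(g)-u(h)| : u \in H^\infty(G,\mu),\ \|u\|_\infty \le 1\,\} = \|\phi_g - \phi_h\|_{L^1(m)}.
\]
First I would use this to define $\theta(g) := \|\phi_g - 1\|_{L^1(m)} = \|m - g_* m\|_{\mathrm{TV}}$. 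Left-translation invariance of $H^\infty(G,\mu)$ makes $\theta$ a left-invariant, symmetric, subadditive function bounded by $2$, and, because $g \mapsto \phi_g$ is harmonic and $\|\cdot\|_{L^1(m)}$ is convex, $\theta$ is moreover a bounded \emph{subharmonic} function. Furstenberg's Theorem then gives the reformulation I will actually use: $(G,\mu)$ is Liouville if and only if $\theta \equiv 0$, i.e. $m$ is $G$-invariant.

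To prove $\ell_\rho(\mu)=0 \Rightarrow$ Liouville I would argue contrapositively. If $(G,\mu)$ is not Liouville then $\theta \not\equiv 0$ and there is a non-constant $u = P_m f \in H^\infty(G,\mu)$; moreover, since $\supp(\mu)$ generates $G$ as a semigroup and $\theta$ is subadditive, $\theta(s_0) > 0$ for some $s_0 \in \supp(\mu)$. The bounded martingale $u(g_n)$ then converges almost surely to a genuinely non-degenerate random variable $f(\xi_\infty)$, where $g_n = s_1\cdots s_n$ and $\xi_\infty \in Z$ is the limit boundary point. The crux is to convert this non-degeneracy into linear escape: the identity above gives $\|g_* m-(gs)_* m\|_{\mathrm{TV}} = \theta(s)$, so group elements that are close in the word-norm $\rho$ push $m$ to measures close in total variation, and consequently the trajectory can only resolve $f(\xi_\infty)$ to macroscopic precision by moving a $\rho$-distance proportional to $n$. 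Quantifying this---ideally through a maximal or energy inequality for the bounded subharmonic function $\theta$ along the walk, fed into the escape expansion $\ell_\rho(\mu) = \lim_n \frac1n \sum_{k\ge 0}\mu^{*n}(\{g : \rho(g) > k\})$---should produce $\ell_\rho(\mu) > 0$.

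For the reverse implication I would again pass through the boundary: if $\theta\equiv 0$ then $m$ is $G$-invariant and the conditional measures $(g_n)_* m$ cannot concentrate, so there is no boundary ``direction'' for the walk to follow. This is precisely where symmetry of $\mu$ is indispensable: on $\bZ$, say, a biased (non-symmetric) walk is Liouville yet ballistic, its drift carried by an additive homomorphism rather than by any boundary. Symmetry of $\mu$ forces $\sum_s \chi(s)\mu(s)=0$ for every homomorphism $\chi\colon G\to\bR$ and, more fundamentally, makes each $\mu^{*n}$ symmetric so that the forward and backward boundaries coincide, ruling out such homomorphism-driven escape. I expect the main obstacle to be exactly the quantitative comparison at the heart of both directions---turning separation of boundary measures into linear word-norm growth and back---which amounts to reproving the Karlsson--Ledrappier linear-drift criterion inside this functional-analytic framework; the boundedness and subharmonicity of $\theta$, the isometry identity, and reversibility of the symmetric walk are the three ingredients I would combine to push it through.
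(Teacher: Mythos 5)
Your setup (the Poisson boundary, the densities $\phi_g$, the total-variation function $\theta$) is sound, but both directions of your argument stop exactly where the proof has to begin, and in the first direction the mechanism you propose cannot work as stated. For ``zero drift $\Rightarrow$ Liouville'' you argue contrapositively and want martingale non-degeneracy to force linear escape, with the comparison $\|g_*m-(gs)_*m\|_{\mathrm{TV}}=\theta(s)\le C\rho(s)$ as the quantitative link. But $\theta$ is bounded by $2$, so this inequality can only ever give a \emph{constant} lower bound on $\rho(g_n)$ (namely, once $(g_n)_*m$ has separated from $m$ in total variation), never a bound growing like $n$; no maximal or energy inequality for a bounded subharmonic function can repair this, because the conclusion $\ell_\rho(\mu)>0$ is a statement about linear growth. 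The paper avoids this trap by working with an \emph{unbounded} quantity: the Poisson semi-norm $\rho_\mu(s)=\log\|\sigma(s,\cdot)\|_\infty$, $\sigma(s,z)=\frac{ds_*m}{dm}(z)$, which still satisfies $\rho_\mu\le C_\mu\rho$ (so zero $\rho$-drift gives zero $\rho_\mu$-drift), and then exploits that the quantities $c_n=\sum_s\bigl(\int_Z\log\sigma(s,z)\,dm(z)\bigr)\check{\mu}^{*n}(s)$ are \emph{additive} in $n$; combining $\ell_{\rho_\mu}(\mu)=0$ with Jensen's inequality forces $c_1=0$, and the equality case in Jensen then gives $\sigma(s,\cdot)\equiv 1$ for all $s\in\supp\check{\mu}$, i.e.\ $m$ is $G$-invariant. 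Note that this direction requires no symmetry at all.

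For ``Liouville $\Rightarrow$ zero drift'' your sketch correctly isolates the role of symmetry (it kills $\sum_s\chi(s)\mu(s)$ for every homomorphism $\chi:G\to\bR$), but the entire content of this direction is to produce a homomorphism whose $\mu$-average \emph{equals} the drift, and that construction is absent: ``the conditional measures cannot concentrate, so there is no boundary direction to follow'' is not an argument. The paper does this in two lemmas: first, using only $\mu$-integrability of $\rho$, Ces\`aro averages of the functions $f_k(s)=\sum_t(\rho(st)-\rho(t))\,\mu^{*k}(t)$ (uniformly dominated by $\rho$ via the triangle inequality) admit a pointwise subsequential limit $\phi$ that is left Lipschitz, satisfies $\phi(e)=0$, and is quasi-$\mu$-harmonic with distortion exactly $\ell_\rho(\mu)$; second, Liouville forces every such $\phi$ to be a homomorphism (because $s\mapsto\phi(gs)-\phi(s)$ is bounded and $\mu$-harmonic, hence constant), whereupon symmetry gives $\ell_\rho(\mu)=\sum_s\phi(s)\mu(s)=0$. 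Your closing admission that what remains ``amounts to reproving the Karlsson--Ledrappier linear-drift criterion'' is accurate: that is precisely the missing proof, in both directions.
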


\begin{remark}
The direction "Zero drift implies Liouville" was proved by Kaimanovich-Vershik in \cite{KV} using the Avez entropy
of random walks, and the direction "Liouville implies Zero drift" was proved by Karlsson-Ledrappier in \cite{KL} 
using their Multiplicative Ergodic Theorem. An alternative proof was later given by Erschler-Karlsson in \cite{EK}.
\end{remark}

\subsection{The Poisson semi-norm}

Let $(Z,m)$ be the Poisson boundary of $(G,\mu)$ and define the (multiplicative) \emph{Poisson cocycle} 
$\sigma(s,z) = \frac{ds_*m}{dm}(z)$, which is well-defined on a $G$-invariant $m$-conull subset $Z' \subset Z$.  
One readily verifies the relations
\begin{equation}
\label{cocycle}
\sigma(st,z) = \sigma(s,z) \sigma(t,s^{-1}z), \quad \textrm{for all $s,t \in G$ and $m$-a.e. $z$},
\end{equation}
and 
\[
\sum_{s \in G} \sigma(s,z) \, \mu^{*k}(s) = 1, \quad \textrm{for all $k \geq 1$ and $m$-a.e. $z$}.
\]
These relations in particular imply that 
\[
\rho_\mu(s) = \log \|\sigma(s,\cdot)\|_\infty, \quad \textrm{for $s \in G$},
\]
defines a semi-norm on $G$, which we shall refer to as the \emph{Poisson semi-norm} of $(G,\mu)$. 

\subsection{Zero drift implies Liouville}
Let $(G,\mu)$ be a finitely generated measured group and let $\rho$ be a word-norm on $G$. Suppose 
that $\rho$ is $\mu$-integrable and satisfies $\ell_\rho(\mu) = 0$. There exists a constant $C_\mu$ such that 
$\rho_\mu \leq C_\mu \rho$, where  $\rho_\mu$ is the Poisson semi-norm defined above, so in 
particular, we conclude that $\rho_\mu$ is $\mu$-integrable and $\ell_{\rho_\mu}(\mu) = 0$. 

Let $(Z,m)$ denote the Poisson boundary of $(G,\mu)$ and define the sequence
\[
c_n := \sum_{s} \Big( \int_Z \log \sigma(s,z) \, dm(z) \Big) \, d\check{\mu}^{*n}(s), \quad \textrm{for $n \geq 1$},
\]
where $\sigma$ denotes the Poisson cocycle defined above. One readily verifies that $c_{n+m} = c_{n} + c_{m}$ 
for all $m,n \geq 1$, and thus
\begin{eqnarray*}
0 &=& \ell_{\rho_\mu}(\mu) = \ell_{\rho_\mu}(\check{\mu}) = \lim_n \frac{1}{n} \sum_{s} \log \|\sigma(s,\cdot)\|_\infty \, \check{\mu}^{*n}(s) 
\geq \liminf_n - \frac{1}{n} \sum_s \Big(\int_Z \log \sigma(s,z) \, dm(z)\Big) \check{\mu}^{*n}(s) \\
&=&  \liminf_n - \frac{c_n}{n} = -c_1 = - \sum_s \Big(\int_Z \log \sigma(s,z) \, dm(z)\Big) \check{\mu}(s) 
\geq - \sum_s \log \Big( \int_Z \sigma(s,z) \, dm(z) \Big) \check{\mu}(s) = 0,
\end{eqnarray*}
where we in the second to last step used Jensen's inequality. We conclude that 
\[
\sum_s \Big(\int_Z \log \sigma(s,z) \, dm(z)\Big) \check{\mu}(s) = 0,
\]
and thus $\sigma(s,z) = 1$ $m$-almost everywhere, for all $s \in \supp \check{\mu}$. By \eqref{cocycle} we conclude that 
$\sigma(s,\cdot) = 1$ $m$-almost everywhere for all $s \in G$, and thus $m$ is $G$-invariant. \\

We stress that we did not use the assumption that $\mu$ is symmetric in this proof. In particular, 
since $\ell_\mu(\rho) = \ell_\rho(\check{\mu})$, we have the following corollary.

\begin{corollary}
Let $(G,\mu)$ be a finitely generated measured group and let $\rho$ be a word-norm on $G$. Assume 
that $\rho$ is $\mu$-integrable. Then $(G,\mu)$ is Liouville if and only if $(G,\check{\mu})$ is Liouville. 
\end{corollary}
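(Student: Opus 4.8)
The plan is to split off the vanishing-drift regime, where the previous subsection already does all the work, from the positive-drift regime, which is where the real content lies. First I would record the drift identity $\ell_\rho(\mu)=\ell_\rho(\check\mu)$. Since $\check\mu^{*n}(s)=\mu^{*n}(s^{-1})$ for every $n$ and every $s$ (reindex the defining sum by $s_i\mapsto s_{n+1-i}^{-1}$), and since $\rho(s)=\rho(s^{-1})$ by the semi-norm axioms in \eqref{norm}, the substitution $s\mapsto s^{-1}$ gives $\sum_s\rho(s)\,\check\mu^{*n}(s)=\sum_s\rho(s)\,\mu^{*n}(s)$ for every $n$; dividing by $n$ and passing to the limit yields $\ell_\rho(\mu)=\ell_\rho(\check\mu)$, and in particular $\rho$ is $\check\mu$-integrable. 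Combined with the symmetry-free implication of the previous subsection (that zero drift forces the Poisson boundary to be trivial), this already settles the case $\ell_\rho(\mu)=0$: then $\ell_\rho(\check\mu)=0$ as well, so both $(G,\mu)$ and $(G,\check\mu)$ are Liouville.

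The main obstacle is the complementary regime $\ell_\rho(\mu)>0$, in which the drift carries no information about the Liouville property. Here one cannot simply run the previous argument in reverse, because the converse implication ``Liouville $\Rightarrow$ zero drift'' genuinely fails once $\mu$ is no longer symmetric: a biased nearest-neighbour walk on $\bZ$ is Liouville (its bounded harmonic functions solve a second-order recursion whose only bounded solutions are constant) yet has strictly positive drift. Thus the equality of drifts alone cannot close the argument, and what is really needed is a numerical invariant of $(G,\mu)$ that both detects the Liouville property and is manifestly unchanged under $\mu\mapsto\check\mu$.

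For this I would invoke the Avez entropy $h(\mu)=\lim_n \tfrac1n H(\mu^{*n})$, where $H$ denotes Shannon entropy; it is finite because $\rho$ is $\mu$-integrable and $G$ has at most exponential growth. The same reindexing $s\mapsto s^{-1}$ that produced the drift identity gives $H(\check\mu^{*n})=H(\mu^{*n})$ for every $n$, whence $h(\mu)=h(\check\mu)$. By the Kaimanovich--Vershik criterion a measured group is Liouville if and only if its Avez entropy vanishes; applying this to $\mu$ and to $\check\mu$ and using $h(\mu)=h(\check\mu)$ yields the corollary. If one prefers to remain inside the boundary formalism of \eqref{corfur} and avoid entropy altogether, the same conclusion follows from the characterization of a trivial Poisson boundary by asymptotic $\ell^1$-invariance of $\mu^{*n}$ under translations, a condition that the reversal $s\mapsto s^{-1}$ transports between $\mu$ and $\check\mu$; establishing that characterization, together with the left--right symmetry it requires, is then the technical heart of the argument.
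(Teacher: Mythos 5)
Your proof is correct, but it takes a genuinely different route from the paper's, and it is in one important respect more careful. The paper's entire justification of this corollary is the remark preceding it: the proof of ``zero drift implies Liouville'' never uses symmetry of $\mu$, together with the identity $\ell_\rho(\mu)=\ell_\rho(\check{\mu})$; no further argument is offered. As you rightly observe, those two facts settle only the regime $\ell_\rho(\mu)=0$, in which both $(G,\mu)$ and $(G,\check{\mu})$ are Liouville. In the regime $\ell_\rho(\mu)>0$ the drift identity is silent: for non-symmetric measures the implication ``Liouville $\Rightarrow$ zero drift'' genuinely fails (your biased walk on $\bZ$ is the standard witness), and the paper's Lemmas \ref{2.1} and \ref{2.2} then produce only a homomorphism realizing the drift, with no control on $\check{\mu}$. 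So the paper's one-line derivation, read literally, presupposes an unrestricted equivalence between the Liouville property and vanishing drift that your counterexample refutes, and the positive-drift case needs an input beyond the paper's drift machinery. Your way of closing it --- finiteness of $H(\mu)$ from the first-moment hypothesis and the exponential growth bound on a finitely generated group (this, rather than finiteness of $h(\mu)$ alone, is what the criterion formally requires), the identity $H(\check{\mu}^{*n})=H(\mu^{*n})$ via $s\mapsto s^{-1}$, and the entropy criterion of Kaimanovich--Vershik \cite{KV} --- is correct and is the standard proof of this statement. The trade-off is that it imports exactly the entropy theory that this paper was written to avoid, so it proves the corollary, but not within the paper's entropy-free program; conversely, the paper's own justification is entropy-free but, as written, does not address the positive-drift case at all. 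Finally, your fallback suggestion via asymptotic $\ell^1$-invariance of $\mu^{*n}$ is rightly flagged as incomplete: the left/right symmetry it needs is essentially a restatement of the corollary itself, so it should not be counted as a proof.
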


There are examples (see e.g. \cite{KV}) of finitely generated measured groups which are 
Liouville, while their adjoint measured groups are not. 

\subsection{Liouville implies zero drift}
A real-valued function $\phi$ on $G$ is called \emph{left Lipschitz} if
\[
\sup_s \big| \phi(gs) - \phi(s) \big| < \infty, \quad \textrm{for all $g \in G$},
\]
and \emph{quasi-$\mu$-harmonic with distortion $\ell$}, where $\ell$ is a real number, if $\phi$ is $\mu$-integrable and
satisfy
\[
\sum_s \phi(gs) \, \mu(s) = \phi(g) + \ell, \quad \textrm{for all $g \in G$}.
\]
In particular, if $\ell = 0$, then $\phi$ is $\mu$-harmonic. The remaining direction in Theorem \ref{thm1} is now 
readily deduced  from the following two lemmata.
\begin{lemma}
\label{2.1}
Let $\rho$ be a norm on $G$ which is $\mu$-integrable. Then there exists a left Lipschitz quasi-$\mu$-harmonic function on $G$ with $\phi(e) = 0$ and distortion $\ell_\rho(\mu)$.
\end{lemma}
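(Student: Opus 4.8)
The plan is to construct $\phi$ as a limit of suitably normalized and Cesàro-averaged ``expected-norm'' functions. For each $n \geq 1$ I set
\[
F_n(g) = \sum_s \rho(gs)\, \mu^{*n}(s) \qand h_n(g) = F_n(g) - F_n(e),
\]
so that $h_n(e) = 0$. Using the triangle inequality together with $\rho(g^{-1}) = \rho(g)$, one has the elementary bound $|\rho(gx) - \rho(x)| \leq \rho(g)$ for all $g,x \in G$, which immediately yields the two uniform estimates $|h_n(g)| \leq \rho(g)$ and $\sup_s |h_n(gs) - h_n(s)| \leq \rho(g)$, valid for every $n$. In particular the functions $h_n$ are uniformly left Lipschitz with the single translation bound $\rho(g)$.

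Next I would compute the quasi-harmonic defect of $h_n$. Using $\sum_s \mu(s) = 1$ together with the convolution identity $\sum_s F_n(gs)\mu(s) = F_{n+1}(g)$ (which is just $\mu^{*n} * \mu = \mu^{*(n+1)}$), one obtains
\[
\sum_s h_n(gs)\,\mu(s) - h_n(g) = F_{n+1}(g) - F_n(g).
\]
These increments do not converge as $n \to \infty$, and this is the crux of the matter: the sequence $F_n(e)$ is only subadditive, and the consecutive differences of a subadditive sequence need not converge to $\ell_\rho(\mu) = \lim_n \tfrac{1}{n} F_n(e)$. The remedy is Cesàro averaging. I set $\phi_N = \tfrac{1}{N}\sum_{n=1}^N h_n$, whose defect telescopes to
\[
\sum_s \phi_N(gs)\,\mu(s) - \phi_N(g) = \frac{F_{N+1}(g) - F_1(g)}{N}.
\]
Since $|F_{N+1}(g) - F_{N+1}(e)| \leq \rho(g)$, one has $\tfrac{1}{N} F_{N+1}(g) \to \ell_\rho(\mu)$ for every $g$, so the right-hand side converges to $\ell_\rho(\mu)$. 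The averages $\phi_N$ inherit the normalization and the uniform bounds $\phi_N(e) = 0$, $|\phi_N(g)| \leq \rho(g)$ and $\sup_s |\phi_N(gs) - \phi_N(s)| \leq \rho(g)$.

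Finally I would pass to a limit. Because $G$ is countable and each value $\phi_N(g)$ lies in the compact interval $[-\rho(g),\rho(g)]$, a Cantor diagonal argument produces a subsequence $\phi_{N_k}$ converging pointwise on all of $G$ to a function $\phi$. Passing the uniform estimates to the limit gives $\phi(e) = 0$ and $\sup_s |\phi(gs) - \phi(s)| \leq \rho(g) < \infty$, so $\phi$ is left Lipschitz. For quasi-harmonicity I evaluate the defect identity along $N_k$: the term $\phi_{N_k}(g)$ tends to $\phi(g)$ by pointwise convergence, while $\sum_s \phi_{N_k}(gs)\mu(s) \to \sum_s \phi(gs)\mu(s)$ by dominated convergence, the summable dominating function being $s \mapsto \rho(gs)\mu(s)$, which is summable precisely because $\rho$ is $\mu$-integrable, since $\sum_s \rho(gs)\mu(s) \leq \rho(g) + \sum_s \rho(s)\mu(s) < \infty$. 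Therefore $\sum_s \phi(gs)\mu(s) - \phi(g) = \ell_\rho(\mu)$, i.e.\ $\phi$ is quasi-$\mu$-harmonic with distortion $\ell_\rho(\mu)$. The main obstacle is exactly the non-convergence of the raw increments $F_{n+1} - F_n$, which is what forces the averaging step; once that is in place, the compactness extraction and the interchange of limit and summation are routine consequences of the $\mu$-integrability of $\rho$.
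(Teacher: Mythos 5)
Your proof is correct and is essentially the paper's own argument: your $h_n(g) = F_n(g) - F_n(e)$ is exactly the paper's $f_n(g) = \sum_t \big(\rho(gt)-\rho(t)\big)\,\mu^{*n}(t)$, and the Ces\`aro averaging, diagonal subsequence extraction, and dominated-convergence passage to the limit are the same steps in the same order. The only difference is bookkeeping: the paper tracks the defect via the identity $\sum_s f_k(gs)\,\mu(s) = f_{k+1}(g) + \sum_s f_k(s)\,\mu(s)$ and identifies the Ces\`aro limit of $\sum_s f_k(s)\,\mu(s)$ with $\ell_\rho(\mu)$ through its telescoped formula for the drift, whereas you telescope $F_{n+1}-F_n$ directly and use $F_{N+1}(g)/N \to \ell_\rho(\mu)$; these are the same computation written in two ways.
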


\begin{lemma}
\label{2.2}
Suppose that $(G,\mu)$ is Liouville. If $\phi$ is a left Lipschitz quasi-$\mu$-harmonic function on $G$ 
with distortion $\ell$ and $\phi(e) = 0$, then $\phi$ is a homomorphism from $G$ into $\bR$. In particular, 
if $\mu$ is symmetric, then $\ell = 0$.
\end{lemma}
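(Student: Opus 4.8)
The plan is to exploit the Liouville hypothesis through a family of auxiliary \emph{bounded} harmonic functions manufactured out of $\phi$. For each fixed $g \in G$ I would introduce the left difference
\[
\psi_g(s) := \phi(gs) - \phi(s), \quad \textrm{for $s \in G$}.
\]
Since $\phi$ is left Lipschitz, each $\psi_g$ lies in $\ell^\infty(G)$; moreover, writing $|\phi(gs)| \leq |\phi(s)| + \|\psi_g\|_\infty$ shows that every left translate of $\phi$ is again $\mu$-integrable, so all the averages below converge absolutely. The crux of the argument is the observation that, although $\phi$ is only \emph{quasi}-$\mu$-harmonic, the constant distortion $\ell$ cancels upon passing to differences: applying the quasi-harmonicity identity at the two points $gh$ and $h$ and subtracting yields
\[
\sum_s \psi_g(hs)\, \mu(s) = \big(\phi(gh) + \ell\big) - \big(\phi(h) + \ell\big) = \phi(gh) - \phi(h) = \psi_g(h),
\]
for every $h \in G$. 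Hence each $\psi_g$ is a bounded $\mu$-harmonic function, and the Liouville assumption forces it to be constant.

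Once each $\psi_g$ is known to be constant, the homomorphism property follows at once. Evaluating the constant at $s = e$ and using $\phi(e) = 0$ gives $\psi_g \equiv \phi(g)$, so that
\[
\phi(gs) = \phi(g) + \phi(s), \quad \textrm{for all $g, s \in G$},
\]
which is precisely the statement that $\phi \colon G \to \bR$ is a homomorphism.

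For the final assertion, I would specialize the quasi-harmonicity identity to $g = e$, obtaining $\ell = \sum_s \phi(s)\, \mu(s)$. Since $\phi$ is now a homomorphism we have $\phi(s^{-1}) = -\phi(s)$; reindexing the sum by $s \mapsto s^{-1}$ and invoking the symmetry $\mu(s^{-1}) = \mu(s)$ then gives $\ell = \sum_s \phi(s^{-1})\,\mu(s) = -\ell$, whence $\ell = 0$.

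I do not anticipate a genuine obstacle in this lemma: the whole argument rests on the single idea that taking left differences converts a quasi-harmonic function into an honest bounded harmonic one, after which the Liouville property does all the work. The only points requiring mild care are the justification, via left Lipschitzness and $\mu$-integrability, that the harmonic averages are absolutely convergent so that the difference may be split, and the elementary bookkeeping in the symmetric reindexing step.
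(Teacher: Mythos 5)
Your proof is correct and follows essentially the same route as the paper: the left difference $\psi_g(s) = \phi(gs)-\phi(s)$ is bounded by left Lipschitzness, the distortion cancels so $\psi_g$ is $\mu$-harmonic, Liouville forces it to be constant, and evaluation at $e$ gives the homomorphism property. Your explicit treatment of the symmetric case (reindexing $s \mapsto s^{-1}$ to get $\ell = -\ell$) supplies a detail the paper leaves implicit, and your integrability check for translates of $\phi$ is the same mild care the paper glosses over with ``readily see.''
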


\begin{proof}[Proof of Lemma \ref{2.1}]
By telescoping, we have
\begin{equation}
\label{alt}
\ell_\rho(\mu) 
= 
\lim_n \frac{1}{n} \sum_{k=0}^{n-1}  \sum_{s,t} \big( \rho(st) - \rho(t) \big) \, \mu^{*k}(t) \, \mu(s).
\end{equation}
Define the functions
\[
f_k(s) := \sum_t \big( \rho(st) - \rho(t) \big) \, \mu^{*k}(t), \quad \textrm{for $s \in G$},
\]
and note that by the triangle inequality, $|f_k(s)| \leq \rho(s)$ for all $s$. Since $\rho$ is assumed to be 
$\mu$-integrable, the sequence
\[
\phi_n = \frac{1}{n} \sum_{k=0}^{n-1} f_k(s), \quad \textrm{for $s \in G$},
\]
is $\mu$-dominated, and by a simple diagonal argument, we can find a sub-sequence $(n_j)$ such 
that the function
\[
\phi(s) := \lim_j \phi_{n_j}(s) \quad \textrm{exists for all $s \in G$}.
\]
In particular, by dominated convergence, we have
\[
\phi(e) = 0 \qand \sum_s \phi(s) \, \mu(s) = \ell_\mu(\rho). 
\]
It remains to prove that $\phi$ is quasi-$\mu$-harmonic and left Lipschitz. Since we can write 
\[
\rho(gst) - \rho(t) = \rho(gst) - \rho(st) + \rho(st) - \rho(t), \quad \textrm{for all $g,s,t \in G$},
\]
we have $|\phi_n(gs) - \phi_n(s)| \leq \rho(g)$ for all $s$ and $n$, and thus $\phi$ is left Lipschitz. Furthermore, 
\begin{equation}
\label{diag}
\sum_s f_k(gs) \, \mu(s) = f_{k+1}(g) + \sum_s f_k(s) \, \mu(s)
\end{equation}
for all $k$. Hence, 
\[
\sum_s \phi(gs) \, \mu(s) = \lim_j \frac{1}{n_j}  \sum_{k=0}^{n_j-1} \sum_s f_k(gs) \, \mu(s) = 
\lim_j \frac{1}{n_j} \sum_{k=0}^{n_j-1} f_{k+1}(g) + \lim_j \frac{1}{n_j}  \sum_{k=0}^{n_j-1} \sum_s f_{k}(s) \mu(s),
\]
which clearly converges to $\phi(g) + \ell_\rho(\mu)$ for all $g \in G$.
\end{proof}

\begin{proof}[Proof of Lemma \ref{2.2}]
Fix $g \in G$ and define $u(s) = \phi(gs) - \phi(s)$. Since $\phi$ is left Lipschitz and quasi-$\mu$-harmonic, we readily see
that $u$ is a bounded $\mu$-harmonic function, and thus constant since $(G,\mu)$ is Liouville. By evaluating $u$
at $e$ and using $\phi(e) = 0$, we conclude that $\phi(gs) - \phi(s) = \phi(g)$ for all $g,s \in G$ and thus $\phi$ is a
homomorphism. 
\end{proof}

\section{Ergodicity of products}

The main aim of this section is to prove the following theorem.

\begin{theorem}
\label{dbl}
Let $(Z,m)$ denote the Poisson boundary of $(G,\mu)$ and let $(X,\check{\nu})$ be an ergodic $(G,\check{\mu})$-space.
Then the diagonal action $G \acts (Z \times X, m \otimes \check{\nu})$ is ergodic.
\end{theorem}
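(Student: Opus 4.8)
The plan is to show that every $G$-invariant $F \in L^\infty(Z\times X, m\otimes\check{\nu})$ is essentially constant. The starting point is the scalar obtained by averaging out the boundary variable: set $\beta(x) = \int_Z F(z,x)\,dm(z)$, a bounded measurable function on $X$. Using the diagonal invariance $F(sz,sx)=F(z,x)$ together with the change of variables $\int_Z \psi(s^{-1}z)\,dm(z) = \int_Z \psi(z)\,\sigma(s^{-1},z)\,dm(z)$, I would rewrite $\beta(sx) = \int_Z F(z,x)\,\sigma(s^{-1},z)\,dm(z)$ and then sum against $\check{\mu}$. After reindexing $t=s^{-1}$, the stationarity relation $\sum_t \sigma(t,z)\mu(t) = 1$ collapses the inner sum to $1$, giving $\sum_s \check{\mu}(s)\beta(sx)=\beta(x)$; that is, $\beta$ is a bounded $\check{\mu}$-harmonic function on the $(G,\check{\mu})$-space $X$.

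The next step is a standard but essential lemma: a bounded harmonic function on an \emph{ergodic} $(G,\check{\mu})$-space is constant. I would prove this by noting that stationarity makes the averaging operator $Qf(x)=\sum_s \check{\mu}(s) f(sx)$ an $L^2(X,\check{\nu})$-contraction (Jensen plus stationarity), so the fixed-point equation $Q\beta=\beta$ forces equality in Jensen, hence $\beta(sx)=\beta(x)$ for every $s$ in $\supp\check{\mu}$ and a.e. $x$; since $\supp\check{\mu}$ generates $G$, the function $\beta$ is $G$-invariant and therefore a.e. equal to a constant $c=\int_{Z\times X}F\,d(m\otimes\check{\nu})$ by ergodicity.

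Finally I would upgrade this averaged statement to pointwise constancy of $F$ by twisting with the whole cocycle. For fixed $g\in G$, the same change of variables gives $\int_Z F(z,x)\,\sigma(g,z)\,dm(z)=\int_Z F(gz,x)\,dm(z)=\int_Z F(z,g^{-1}x)\,dm(z)=\beta(g^{-1}x)=c$, the middle equality again being diagonal invariance. Since $\int_Z \sigma(g,z)\,dm(z)=1$, this says that for a.e. $x$ the bounded function $z\mapsto F(z,x)-c$, viewed as an element of $L^\infty(Z,m)=L^1(Z,m)^*$, annihilates $\sigma(g,\cdot)=\tfrac{dg_*m}{dm}$ for every $g\in G$ (the countability of $G$ lets me choose one conull set of $x$ working for all $g$ at once). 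By the reformulation \eqref{corfur} of Furstenberg's theorem these Radon--Nikodym derivatives span a dense subspace of $L^1(Z,m)$, so $F(\cdot,x)=c$ for $m$-a.e. $z$, and Fubini then yields $F=c$ almost everywhere.

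The main obstacle is the passage from the averaged function $\beta$ to $F$ itself: the harmonicity argument only controls the $Z$-average, and it is precisely the maximality of the Poisson boundary, encoded in the density \eqref{corfur}, that converts the vanishing of all cocycle-twisted averages into the vanishing of $F-c$. For a general $\mu$-boundary this density fails, which is where the hypothesis that $(Z,m)$ is the full Poisson boundary is indispensable; the other delicate point is checking that the elementary harmonic-implies-constant lemma for $\check{\mu}$-spaces remains valid in the stationary (non-measure-preserving) setting, which the $L^2$-contraction argument supplies.
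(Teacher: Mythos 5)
Your proof is correct and follows essentially the same route as the paper: you average over $Z$ to obtain a bounded $\check{\mu}$-harmonic function on $X$, prove it is constant by the same equality-in-Jensen computation that the paper isolates as Lemma \ref{statinv}, and then upgrade to $F$ itself using the density \eqref{corfur} of the span of the Radon--Nikodym derivatives in $L^1(Z,m)$. The only differences are cosmetic: the paper performs the density reduction first (after normalizing $\int F\, dm\otimes\check{\nu} = 0$), whereas you perform it last with a general constant $c$, and your operator-theoretic phrasing of the harmonic-implies-constant step is just Lemma \ref{statinv} in disguise.
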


As an immediate consequence of this theorem, we deduce an important special case of a result by 
Kaimanovich \cite{K03}:

\begin{corollary}
Let $(Z,m)$ and $(\check{Z},\check{m})$ denote the Poisson boundaries of $(G,\mu)$ and $(G,\check{\mu})$ 
respectively. Let $(Y,\eta)$ be an ergodic probability measure-preserving $G$-space. Then the diagonal action
$G \acts (Z \times \check{Z} \times Y, m \otimes \check{m} \otimes \eta)$ is ergodic.
\end{corollary}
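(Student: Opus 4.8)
The plan is to derive the Corollary from Theorem \ref{dbl} by applying it twice, once for each of the two Poisson boundaries occurring in the triple product. The guiding observation is that each of the two auxiliary factors that must be inserted into Theorem \ref{dbl} can be arranged to be an ergodic stationary space for the appropriate adjoint measure.

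First I would note that $(Y,\eta)$, being an ergodic measure-preserving $G$-space, is in particular an ergodic $(G,\mu)$-space: since $\eta$ is $G$-invariant it is a fortiori $\mu$-stationary, and ergodicity is assumed. Now apply Theorem \ref{dbl} to the measured group $(G,\check{\mu})$, whose Poisson boundary is $(\check{Z},\check{m})$ and whose adjoint is $(G,\check{\check{\mu}})=(G,\mu)$, taking for the ergodic $(G,\mu)$-space the space $(Y,\eta)$. The theorem then yields that the diagonal action $G \acts (\check{Z} \times Y, \check{m} \otimes \eta)$ is ergodic.

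Next I would verify that $(\check{Z} \times Y, \check{m} \otimes \eta)$ is a genuine $(G,\check{\mu})$-space, i.e. that $\check{m} \otimes \eta$ is $\check{\mu}$-stationary, so that it is an admissible input for the second application. This rests on the fact that the product of a stationary measure with an invariant measure is again stationary for the diagonal action: because $\eta$ is $G$-invariant, $s_*(\check{m} \otimes \eta) = s_*\check{m} \otimes \eta$, whence $\sum_s \check{\mu}(s)\, s_*(\check{m} \otimes \eta) = \big(\sum_s \check{\mu}(s)\, s_*\check{m}\big) \otimes \eta = \check{m} \otimes \eta$ by the $\check{\mu}$-stationarity of $\check{m}$. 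Together with the previous step, this shows that $(\check{Z} \times Y, \check{m} \otimes \eta)$ is an ergodic $(G,\check{\mu})$-space.

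Finally I would apply Theorem \ref{dbl} a second time, now to $(G,\mu)$ with Poisson boundary $(Z,m)$ and the ergodic $(G,\check{\mu})$-space $(\check{Z} \times Y, \check{m} \otimes \eta)$ just produced, concluding that the diagonal action on $\big(Z \times (\check{Z} \times Y),\, m \otimes (\check{m} \otimes \eta)\big)$ is ergodic; associativity of the product identifies this with $\big(Z \times \check{Z} \times Y,\, m \otimes \check{m} \otimes \eta\big)$, which is the assertion. I do not anticipate a genuine obstacle, since all the substance is absorbed into Theorem \ref{dbl}; the only points requiring care are the bookkeeping of adjoints (the factor paired with $Z$ must be $\check{\mu}$-stationary and the factor paired with $\check{Z}$ must be $\mu$-stationary) and the elementary check that a product of a stationary and an invariant measure remains stationary.
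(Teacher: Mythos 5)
Your proposal is correct and follows essentially the same route as the paper: the paper also applies Theorem \ref{dbl} twice, first to $(G,\check{\mu})$ with the ergodic measure-preserving space $(Y,\eta)$ to obtain ergodicity of $(\check{Z} \times Y, \check{m} \otimes \eta)$, and then to $(G,\mu)$ with that product as the ergodic $(G,\check{\mu})$-space. Your explicit verification that $\check{m} \otimes \eta$ is $\check{\mu}$-stationary is a detail the paper leaves implicit, but the argument is the same.
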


\begin{proof}
We shall use Theorem \ref{dbl} twice. First, note that since $\eta$ is preserved by the $G$-action, the product space $(\check{Z} \times Y, \check{m} \times \eta)$ is an ergodic $(G,\check{\mu})$-space by Theorem \ref{dbl}. Hence, by Theorem \ref{dbl}, the diagonal $G$-action on the product of $(Z,m)$ and $(\check{Z} \times Y, \check{m} \times \eta)$ is ergodic.
\end{proof}

We shall prove that if $F \in L^\infty(Z \times X,m \otimes \check{\nu})$ is essentially $G$-invariant and 
$\int_{Z \times X} F \, dm \otimes \check{\nu} = 0$, then $F = 0$ almost everywhere, or equivalently, for every 
$\phi \in L^1(Z,m)$, 
\[
\int_{Z} \phi(z) \, F(z,x) \, dm(z) = 0, \quad \textrm{for $\check{\nu}$-a.e. $x \in X$}.
\] 
Since the linear span of all functions of the form $\frac{ds_*m}{dm}$, where $s$ ranges over $G$, is norm dense in the Banach space $L^1(Z,m)$ by \eqref{corfur}, we 
are left with proving
\[
\int_Z \frac{ds_*m}{dm}(z) \, F(z,x) \, dm(z) = \int_Z F(sz,x) \, dm(z) = \int_Z  F(z,s^{-1}x) \, dm(z) = 0
\]
for $\check{\nu}$-a.e. $x \in X$ and for all $s \in G$. Let $f(x) = \int_Z F(z,x) \, dm(z)$ and note that 
$f \in L^\infty(X,\check{\nu})$ and
\[
\sum_{s} f(sx) \, \check{\mu}(s) 
= \sum_s \Big( \int_Z F(sz, x) \, dm(z) \Big) \, d\mu(s) = f(x),
\]
since $m$ is $\mu$-stationary. We wish to prove that $f$ vanishes $\check{\nu}$-almost everywhere, or, what amounts to the same, that $f$
is $\check{\nu}$-essentially constant. Indeed, if $f$ is essentially constant, then
\[
f(x) = \int_Z F(z,x) \, dm(z) = \int_{Z \times X} F \, dm\otimes \check{\nu} = 0, \quad \textrm{for $\check{\nu}$-a.e $x \in X$},
\]
and thus the following lemma, applied to the ergodic $(G,\check{\mu})$-space $(X,\check{\nu})$, finishes the proof of Theorem \ref{dbl}.

\begin{lemma}
\label{statinv}
Let $(X,\nu)$ be a $(G,\mu)$-space. If $f \in L^\infty(X,\nu)$ satisfies 
\[
\sum_{s} f(sx) \, \mu(s) = f(x), \quad \textrm{for $\nu$-a.e. $x \in X$}, 
\]
then $f$ is $G$-invariant. In particular, if $(X,\nu)$ is ergodic, then $f$ is 
essentially constant.
\end{lemma}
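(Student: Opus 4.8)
The plan is to exploit the $\mu$-stationarity of $\nu$ together with the strict convexity of $t \mapsto t^2$ in an $L^2$-variance argument; this is the natural measure-theoretic substitute for the maximum principle one would use on a single function. Since $\nu$ is a probability measure and $f$ is bounded, both $f$ and $f^2$ lie in $L^\infty(X,\nu)$, so every integral below is finite and I may apply the stationarity identity to them without scruple.

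First I would test stationarity against $g = f^2 \in L^\infty(X,\nu)$, which gives
\[
\int_X f^2 \, d\nu = \sum_s \Big(\int_X f(sx)^2 \, d\nu(x)\Big)\mu(s) = \int_X \Big(\sum_s f(sx)^2\,\mu(s)\Big)\, d\nu(x),
\]
the interchange of sum and integral being justified by boundedness. On the other hand, the pointwise hypothesis $\sum_s f(sx)\mu(s) = f(x)$ combined with Jensen's inequality for $t \mapsto t^2$, applied to the probability vector $(\mu(s))_s$, yields
\[
f(x)^2 = \Big(\sum_s f(sx)\,\mu(s)\Big)^2 \le \sum_s f(sx)^2\,\mu(s) \quad \textrm{for $\nu$-a.e. } x.
\]
Writing $h(x) := \sum_s f(sx)^2\,\mu(s) - f(x)^2 \ge 0$, the two displays together force $\int_X h \, d\nu = 0$, hence $h = 0$ $\nu$-almost everywhere.

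Next I would unpack the equality case of Jensen: for $\nu$-a.e.\ $x$ the variance of $s \mapsto f(sx)$ against $(\mu(s))_s$ vanishes, so $f(sx)$ is independent of $s$ on $\supp\mu$, and its common value must equal the mean $f(x)$. Thus $f(sx) = f(x)$ for $\nu$-a.e.\ $x$ and every $s \in \supp\mu$ (only countably many $s$ occur, so the null sets assemble harmlessly). To promote this to full $G$-invariance I would regard each identity as an equality of $L^\infty$-classes, where the $G$-action is well defined because it preserves the null ideal and is nonsingular; since $\supp\mu$ generates $G$ as a semigroup, iterating the implication ``$f(s\cdot)=f$ and $f(t\cdot)=f$ imply $f(st\cdot)=f$'' gives $f(gx)=f(x)$ $\nu$-a.e.\ for every $g \in G$. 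Finally, under ergodicity I would apply $G$-invariance to the level sets $\{f > c\}$: each is $G$-invariant up to null sets, hence null or conull, whence $f$ is essentially constant.

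The main obstacle, and the only point demanding care, is the passage from invariance under the generating set $\supp\mu$ to invariance under all of $G$: because each identity $f(sx)=f(x)$ holds only off a null set, one must work with $L^\infty$-classes and use nonsingularity of the action to legitimize composing countably many such identities along words in $\supp\mu$. The variance analysis, the equality-in-Jensen step, and the concluding ergodicity argument are then entirely routine.
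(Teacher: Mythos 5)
Your proof is correct and is essentially the paper's own argument: both rest on the same $L^2$-variance identity obtained by testing stationarity against $f^2$, and your equality-case-of-Jensen step is precisely the paper's expansion of $\sum_s \int_X (f(sx)-f(x))^2 \, d\nu(x)\, \mu^{*k}(s)$ with the square completed. The only organizational difference is that the paper runs the computation for every convolution power $\mu^{*k}$, obtaining invariance under all of $(\supp\mu)^k$ in one stroke, whereas you do it only for $k=1$ and then propagate along words in $\supp\mu$ using nonsingularity of the action --- a null-set bookkeeping step the paper's iteration to $\mu^{*k}$ implicitly requires as well.
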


\begin{proof}
Since the support of $\mu$ generates $G$ as a semi-group, it suffices to show that
\[
\sum_{s} \Big( \int_X \big| f(s x) - f(x) \big|^2 \, d\nu(x) \Big) \, \mu^{*k}(s) = 0
\]
for all $k$. Upon expanding the square and using the fact that $(X,\nu)$ is a $(G,\mu)$-space, we note that 
\[
\sum_s \Big( \int_X (f(s x) - f(x))^2 \, d\nu(x) \Big) \, \mu^{*k}(s) 
=
2 \Big( \int_X f^2 \, d\nu - \int_X f(x) \Big( \sum_s f(s x) \, \mu^{*k}(s) \Big) \, d\nu(x) \Big).
\]
By our assumption on $f$, we conclude that these expressions vanish for every $k$.
\end{proof}

\section{Failure of weak-mixing}

We recall that a \emph{factor} of a $G$-space $(X,\nu)$ is a $G$-space $(W,\xi)$ together with a $G$-equivariant Borel map $p : X' \ra W$, where $X' \subset X$ is a $G$-invariant $\nu$-conull set such that 
\[
p_*\nu(A) = \nu(p^{-1}(A)) = \xi(A), \quad \textrm{for all Borel sets $A \subset W$}.
\]
We say that the $G$-spaces $(X,\nu)$ and $(W,\xi)$ are \emph{isomorphic} if $p$ admits a measurable inverse
map $q : W' \ra X$, defined on a $G$-invariant subset $W' \subset W$, such that $q_*\xi = \nu$. \\ 

Let $K$ be a compact and second countable group and suppose that $\tau : G \ra K$ is a homomorphism with dense image. Given a closed subgroup $L < K$, we denote by $m_{K/L}$ the Haar probability measure on $K/L$, and we note that $(K/L,m_{K/L})$ is a $G$-space under the $G$-action $gkL = \tau(g)kL$, for the coset $kL \in K/L$. We say that a $G$-space $(W,\xi)$ is \emph{isometric} if it is isomorphic to a $G$-space of the form $(K/L,m_{K/L})$ for some
compact and second countable group $K$, closed subgroup $L$ of $K$ and a homomorphism $\tau : G \ra K$ with dense image. Since we assume that our compact groups are second countable, the associated $L^2$-spaces (with respect to the Haar probability measures) are separable in the weak topology. \\

The main aim of this section is to prove the following theorem.

\begin{theorem}
\label{thm3}
Let $(X,\nu)$ be an ergodic $(G,\mu)$-space and $(Y,\eta)$ an ergodic probability measure-preserving $G$-space.
Suppose that the diagonal action $G \acts (X \times Y, \nu \otimes \eta)$ is \emph{not} ergodic. Then there exists a
non-trivial factor of $(X,\nu)$ which is an isometric $G$-space.
\end{theorem}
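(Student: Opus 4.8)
The plan is to convert the failure of ergodicity into a finite--dimensional unitary datum living over the \emph{measure--preserving} factor $Y$, and then to recognize the associated homogeneous space as the desired isometric factor. Since $G\acts(X\times Y,\nu\otimes\eta)$ is not ergodic there is a Borel $G$--invariant set of intermediate measure; subtracting its mean we obtain a real $F\in L^2(X\times Y,\nu\otimes\eta)$ with $F\neq 0$, $\int F\,d\nu\otimes\eta=0$, and $F(gx,gy)=F(x,y)$ for a.e.\ $(x,y)$ and all $g$. Set $H=L^2_0(Y,\eta)$ and let $\pi(g)\xi(y)=\xi(g^{-1}y)$ be the Koopman representation, which is orthogonal because $\eta$ is $G$--invariant. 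Writing $\Phi(x)=F(x,\cdot)$ gives a measurable map $\Phi\colon X\to L^2(Y)$ with $\Phi(gx)=\pi(g)\Phi(x)$. The function $x\mapsto\int_Y F(x,y)\,d\eta(y)$ is $G$--invariant, hence constant $=0$ by ergodicity of $(X,\nu)$, so in fact $\Phi$ maps into $H$; similarly $x\mapsto\|\Phi(x)\|^2$ is $G$--invariant, hence equal to the constant $\|F\|_2^2>0$.

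The heart of the argument, and the step where stationarity must be used essentially, is the construction of a finite--dimensional $\pi(G)$--invariant subspace of $H$. Define the positive operator $R$ on $H$ by $Rv=\int_X\langle v,\Phi(x)\rangle\,\Phi(x)\,d\nu(x)$; it is trace class with $\trace R=\|F\|_2^2$, hence compact with purely discrete spectrum of finite multiplicities. Because $\pi(s)\Phi(x)=\Phi(sx)$ we have $\pi(s)R\pi(s)^{*}=\int_X\langle\,\cdot\,,\Phi(x)\rangle\,\Phi(x)\,d(s_{*}\nu)(x)$, and $\mu$--stationarity of $\nu$ (in the form $\sum_s \mu(s)\,s_{*}\nu=\nu$) yields the fixed--point identity
\[
\sum_{s}\mu(s)\,\pi(s)\,R\,\pi(s)^{*}=R .
\]
Let $c=\|R\|$ be the largest eigenvalue and $E=\ker(R-c)$ its (finite--dimensional, nonzero) eigenspace. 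For a unit vector $v\in E$ the identity gives $c=\langle Rv,v\rangle=\sum_s\mu(s)\langle R\pi(s)^{*}v,\pi(s)^{*}v\rangle$; since $\langle Rw,w\rangle\le c$ for every unit $w$ with equality exactly on $E$, every summand must equal $c$, so $\pi(s)^{*}v\in E$ for all $s\in\supp\mu$. As $E$ is finite dimensional and $\supp\mu$ generates $G$ as a semigroup, $E$ is $\pi(G)$--invariant. I expect this extraction of a finite--dimensional invariant subspace from the stationary density operator $R$ to be the main obstacle, precisely because the Koopman action on $L^2(X,\nu)$ is not unitary and the naive intertwining operators fail to commute with $\pi$; the operator $R$ is designed so that stationarity repairs this.

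With $E$ in hand, let $\sigma=\pi|_E\colon G\to\U(E)$ and $K=\overline{\sigma(G)}\subset\U(E)$; since $E$ is finite dimensional, $K$ is a compact, second countable group, and $\tau(g)=\sigma(g)$ has dense image by construction. Let $P\colon H\to E$ be the orthogonal projection; because $\pi(g)$ is orthogonal and $E$ is invariant, $P$ commutes with $\pi(g)$, so $\psi(x):=P\Phi(x)$ satisfies $\psi(gx)=\sigma(g)\psi(x)=\tau(g)\psi(x)$. Here $\|\psi(x)\|$ is again $G$--invariant, and $\int_X\|\psi\|^2\,d\nu=\trace(PRP)=c\dim E>0$, so this constant is positive and, after normalizing, $\psi$ descends to an equivariant map into the unit sphere $\mathbb S(E)$. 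Pushing $\nu$ forward gives a $\mu$--stationary probability measure $\lambda$ on the compact $K$--space $\mathbb S(E)$; since $K$ is compact and $\tau(G)$ is dense, a standard Cesàro--averaging of $\mu^{*n}$ to Haar measure on $K$ shows $\lambda$ is $K$--invariant, and ergodicity of $(X,\nu)$ forces $\lambda$ to be $K$--ergodic, hence the Haar image of a single orbit $K/L$ with $L=\Stab_K(v_0)$. Thus $(K/L,m_{K/L})$ is an isometric $G$--space which is a factor of $(X,\nu)$.

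Finally I would check nontriviality. The subspace $E\subset L^2_0(Y)$ contains no nonzero $\pi(G)$--fixed vector, since such a vector would be a $G$--invariant function on $Y$ of zero mean, hence $0$ by ergodicity of $(Y,\eta)$. Consequently $\mathbb S(E)$ has no $K$--fixed point, so the orbit $K/L$ is not a single point and the factor is nontrivial. (Note also that $m_{K/L}$ is $K$--invariant, hence $G$--invariant through $\tau$, so the factor is measure--preserving, as required.)
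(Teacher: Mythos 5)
Your proof is correct, and while it opens the same way as the paper, its decisive step follows a genuinely different route. The common part is the reduction: like the paper's Lemma \ref{3.1}, you convert the invariant function $F$ into a $G$-equivariant map $\Phi\colon X \to L^2_o(Y,\eta)$, using ergodicity of $(X,\nu)$ to see that $\Phi$ lands in the zero-mean subspace. From there the paper pushes $\nu$ forward to the \emph{weak} unit ball $B_1$ of $L^2_o(Y,\eta)$, proves in Lemma \ref{3.2} that any $\mu$-stationary measure on $B_1$ is automatically $G$-invariant (via the tensor-power moments $\sigma_k$ and strict convexity of Hilbert norm balls), observes that $(u,v)\mapsto\langle u,v\rangle$ is an invariant Borel function that is not essentially constant, and then invokes Mackey's theorem \cite{Ma} as a black box to produce the isometric factor. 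You instead build the isometric factor explicitly: the covariance operator $R=\int_X\langle\,\cdot\,,\Phi(x)\rangle\Phi(x)\,d\nu(x)$ is positive and trace class, stationarity of $\nu$ yields the fixed-point identity $\sum_s\mu(s)\,\pi(s)R\pi(s)^{*}=R$, and the equality case in $\langle Rw,w\rangle\le\|R\|$ forces the top eigenspace $E$ to be a finite-dimensional $\pi(G)$-invariant subspace; projecting, normalizing, and pushing $\nu$ forward to the unit sphere of $E$ gives a stationary measure that is invariant under $K=\overline{\sigma(G)}$ by Kawada--It\^o, ergodic because it is a factor of $(X,\nu)$, hence the Haar measure on a single orbit $K/L$; ergodicity of $(Y,\eta)$ (no nonzero $G$-invariant vectors in $L^2_o(Y,\eta)$) rules out a fixed point, so $K/L$ is not a point. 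In effect you re-prove, in the case at hand, exactly the statement the paper outsources to Mackey. What each approach buys: the paper's argument is shorter and stays in the infinite-dimensional ball, at the price of citing Mackey's structure theorem; yours is more self-contained and exhibits the compact group $K$, the homogeneous space $K/L$, and the factor map concretely, at the price of the spectral argument and Kawada--It\^o --- and note you could even replace the Kawada--It\^o step by the paper's own strict-convexity trick applied to the finite-dimensional sphere, making the proof essentially elementary. Two minor points to tidy: the Koopman representation on $L^2_o(Y,\eta)$ is orthogonal (the space is real), so write $O(E)$ rather than $\operatorname{U}(E)$; and the a.e.-equivariance of $\Phi$ (which holds per group element) should be upgraded to pointwise equivariance on a $G$-invariant conull subset before pushing measures forward --- routine since $G$ is countable, and the paper is equally brief about it.
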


\begin{remark}
In a remark in an earlier pre-print of this paper, we discussed an alternative approach to this theorem, based on some dynamical properties of the WAP-compactification of $G$ established earlier by Furstenberg-Glasner in \cite{FG10}. Recently, Glasner-Weiss expanded on this remark, and gave a full alternative proof of Theorem \ref{thm3}
in \cite{GW}.
\end{remark}

We need some notation. Let 
$B_1$ denote the unit ball in the \emph{real} Hilbert space $L^2_o(Y,\eta)$ of real-valued square-integrable 
functions with zero integrals, identified up to null sets, endowed with the weak topology, which 
makes $B_1$ into a compact and second countable space. The
regular (Koopman) representation $\pi$ on $L^2_o(Y,\eta)$ defined by $\pi(s)u(y) = u(s^{-1}y)$ is unitary, and gives rise to a weakly continuous action on $B_1$ via $su = \pi(s)u$ for $s \in G$ and $u \in B_1$. Theorem \ref{thm3} is an immediate consequence of the following two lemmata. 

\begin{lemma}
\label{3.1}
Suppose that $f \in L^\infty(X \times Y, \nu \otimes \eta)$ is essentially $G$-invariant and has zero $\nu \otimes \eta$-integral, but does not vanish almost everywhere. If $\|f\|_\infty \leq 1$, then $p_f : X \ra  B_1$ given by $p_f(x) = f(x,\cdot)$ is a factor map, where $\nu_f = (p_f)_*\nu$. Furthermore, the Borel function 
\[
\Phi(u,v) =  \langle u, v \rangle_{L^2_o(Y,\eta)}, \quad \textrm{for $u,v \in B_1$}, 
\] 
is \emph{not} $\nu_f \otimes \nu_f$-essentially constant on $B_1 \times B_1$. In particular, $(B_1,\nu_f)$ is a non-trivial ergodic $(G,\mu)$-space.
\end{lemma}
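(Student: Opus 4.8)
The plan is to proceed in three steps: first show that $p_f$ is a factor map onto a $(G,\mu)$-space, then compute the barycenter of $\nu_f$, and finally rule out constancy of $\Phi$ by a second-moment identity. For the first step I would check that $p_f(x) = f(x,\cdot)$ takes values in $B_1$ for $\nu$-a.e. $x$. Boundedness gives $\|f(x,\cdot)\|_{L^2_o(Y,\eta)} \le \|f\|_\infty \le 1$, so only the vanishing of the fibre integrals needs an argument: setting $h(x) = \int_Y f(x,y)\,d\eta(y)$, the essential $G$-invariance of $f$ together with the invariance of $\eta$ gives $h(sx) = h(x)$ for all $s$, so $h$ is $G$-invariant; since $(X,\nu)$ is ergodic and $\int_X h\,d\nu = \int_{X\times Y} f\,d\nu\otimes\eta = 0$, we get $h = 0$ a.e., i.e. $f(x,\cdot)\in L^2_o(Y,\eta)$ for a.e. $x$. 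Weak Borel measurability of $p_f$ reduces, since the weak topology on $B_1$ is second countable, to measurability of $x\mapsto \langle f(x,\cdot),g\rangle$ for countably many $g$, which is Fubini; equivariance is the reformulation $f(sx,y) = f(x,s^{-1}y)$ of the invariance of $f$, saying exactly that $p_f(sx) = \pi(s)p_f(x) = s\cdot p_f(x)$. Finally $\nu_f = (p_f)_*\nu$ is $\mu$-stationary as the pushforward of a stationary measure under an equivariant map, so $(B_1,\nu_f)$ is a $(G,\mu)$-space and $p_f$ is a factor map.

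I expect the main obstacle to be the following fact, which is the engine of the whole argument: the barycenter $b = \int_{B_1} u\,d\nu_f(u) = \int_X f(x,\cdot)\,d\nu(x)$, taken as a Pettis integral in $L^2_o(Y,\eta)$ (it exists since $B_1$ is weakly compact and convex), is zero. Testing the $\mu$-stationarity of $\nu_f$ against the bounded, weakly continuous functionals $u\mapsto\langle u,g\rangle$ yields $\sum_s \mu(s)\,\pi(s)b = b$. Since $\pi$ is unitary, $\|b\| = \|\sum_s\mu(s)\pi(s)b\| \le \sum_s \mu(s)\|\pi(s)b\| = \|b\|$, and equality in the triangle inequality in a Hilbert space forces $\pi(s)b = b$ for every $s\in\supp\mu$, hence for all $s\in G$ as $\supp\mu$ generates $G$. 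Thus $b\in L^2_o(Y,\eta)$ is a $G$-invariant function of zero mean on the ergodic measure-preserving space $(Y,\eta)$, and so $b = 0$. The delicate points here are precisely the interplay of stationarity (not invariance) of $\nu_f$ with the unitarity of $\pi$ and the measure-theoretic care needed to justify passing the Pettis integral through the functionals.

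With $b=0$ in hand the conclusion is short. Suppose for contradiction that $\Phi$ is $\nu_f\otimes\nu_f$-essentially equal to a constant $c$. Integrating gives $c = \int_{B_1\times B_1}\langle u,v\rangle\,d\nu_f(u)\,d\nu_f(v) = \langle b,b\rangle = 0$, so $\langle u,v\rangle = 0$ for $\nu_f\otimes\nu_f$-a.e. $(u,v)$. Consider the second-moment operator $S = \int_{B_1} u\otimes u\,d\nu_f(u)$, a positive trace-class operator on $L^2_o(Y,\eta)$ with $\trace S = \int_{B_1}\|u\|^2\,d\nu_f(u) = \|f\|_{L^2}^2$. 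Using $\langle u\otimes u,\, v\otimes v\rangle_{\mathrm{HS}} = \langle u,v\rangle^2$ and commuting the integral with the bilinear form, its Hilbert--Schmidt norm satisfies $\|S\|_{\mathrm{HS}}^2 = \int\int \langle u,v\rangle^2\,d\nu_f(u)\,d\nu_f(v) = 0$; the point of passing to $S$ is that a positive operator with vanishing Hilbert--Schmidt norm is $0$, which sidesteps the null diagonal entirely, so $\trace S = 0$ and hence $\|f\|_{L^2}^2 = 0$. This contradicts the hypothesis that $f$ does not vanish almost everywhere, so $\Phi$ is not essentially constant. For the final assertion, $(B_1,\nu_f)$ is ergodic because it is a factor of the ergodic space $(X,\nu)$ (the $p_f$-preimage of a $G$-invariant set is $G$-invariant), and it is non-trivial because a point mass $\nu_f$ would make $\Phi$ essentially constant, which we have just excluded.
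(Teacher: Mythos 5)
Your proposal is correct, and its skeleton matches the paper's (factor map, then show the ``mean'' of $f$ over $X$ vanishes so the putative constant is $0$, then rule out $\langle u,v\rangle = 0$ a.e.), but both key steps are executed by genuinely different means. Where you prove that the barycenter $b = \int_{B_1} u \, d\nu_f(u)$ vanishes via stationarity of $\nu_f$ plus strict convexity of the Hilbert ball (equality in the triangle inequality forcing $\pi(s)b = b$ on $\supp \mu$), the paper instead observes that the same object, written as $\Lambda(y) = \int_X f(x,y)\,d\nu(x)$, satisfies $\sum_s \Lambda(sy)\check{\mu}(s) = \Lambda(y)$ and invokes its Lemma \ref{statinv} applied to $(Y,\eta)$ viewed as a $(G,\check{\mu})$-space; both then finish with ergodicity of $(Y,\eta)$. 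Your device is in fact the one the paper saves for Lemma \ref{3.2} (strict convexity of norm balls in $\cH^{\otimes k}$), so you are essentially importing part of that lemma's technique into Lemma \ref{3.1}; this makes your proof self-contained on $B_1$ at the cost of duplicating machinery the paper deliberately factors out. For the final contradiction, the paper stays on $X \times X$: from $f_2(x,z) = \int_Y f(x,y)f(z,y)\,d\eta(y) = 0$ a.e.\ it tests against $\psi \otimes \psi$ with $\psi \in L^\infty(X,\nu)$ to get $\int_X f(x,y)\psi(x)\,d\nu(x) = 0$ and hence $f = 0$; you instead form the covariance operator $S = \int_{B_1} u \otimes u \, d\nu_f(u)$ and note $\|S\|_{\mathrm{HS}}^2 = \iint \langle u,v\rangle^2 \, d\nu_f\, d\nu_f = 0$ while $\trace S = \|f\|_{L^2}^2$, which handles the null-diagonal issue by operator positivity rather than by duality with $L^\infty(X,\nu)$. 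Both routes are sound (your Fubini/dominated-convergence interchanges for the Pettis integral and the HS pairing are justified since everything lives in the unit ball and $\nu_f$ is a probability measure), so this is a valid alternative proof rather than a reproduction of the paper's.
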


\begin{remark}
To see why $p_f(x) \in B_1$, note that $\|p_f(x)\|_{L^2} \leq \|p_f(x)\|_\infty \leq 1$ and the function
\[
h(x) = \int_Y p_f(x) \, d\nu = \int_Y f(x,y) \, d\eta(y) 
\]
is essentially $G$-invariant, and thus essentially constant by ergodicity of $(X,\nu)$. Since $f$ is assumed to 
have $\nu \otimes \eta$-integral equal to zero, we conclude that $h$ vanishes almost everywhere, and thus 
$p_f(x) \in L^2_o(Y,\eta)$ for $\nu$-a.e. $x$ in $X$. 
\end{remark}

In the next lemma, $\cH$ denotes a real (separable) Hilbert space with inner product $\langle \cdot, \cdot \rangle$
and $B_1$ denotes the unit ball in $\cH$ endowed with the weak topology. We assume that $\pi$ is a unitary representation of $G$ on $\cH$, and consider $B_1$ as a $G$-space under the action $gu = \pi(g)u$. 

\begin{lemma}
\label{3.2}
Suppose that $\xi$ is a $\mu$-stationary Borel probability measure on $B_1$. Then $\xi$ is $G$-invariant. Furthermore, if the Borel map $(u,v) \mapsto \langle u, v \rangle$
is \emph{not} $\xi \otimes \xi$-essentially constant on $B_1 \times B_1$, then $(B_1,\xi)$ admits a 
non-trivial factor which is isometric.
\end{lemma}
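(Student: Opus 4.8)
The plan is to treat the two assertions separately: I would first derive $G$-invariance of $\xi$, and then use the resulting $G$-invariant second-moment operator to manufacture an isometric factor. The basic engine for invariance is the following elementary fact about barycenters. If $\eta$ is a $\mu$-stationary probability measure on the unit ball of a real Hilbert space $\cK$ carrying a unitary $G$-action $\rho$, then its barycenter $b = \int_{\cK} w \, d\eta(w)$ (a well-defined Pettis integral, since $\|w\|\le 1$) is $G$-fixed: stationarity $\eta = \sum_s \mu(s)\, s_*\eta$ gives $b = \sum_s \mu(s)\, \rho(s)b$, whence $\|b\| \le \sum_s \mu(s)\|\rho(s)b\| = \|b\|$ is an equality in the triangle inequality, and strict convexity of the Hilbert norm forces $\rho(s)b = b$ for every $s\in\supp\mu$, hence for every $s\in G$ because $\supp\mu$ generates $G$ as a semigroup. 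Applied to $\xi$ itself this only pins down the first moment, so the work is to upgrade it.

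To reach full invariance I would feed the higher moments into the same argument. For each $k\ge 1$ the map $u \mapsto u^{\otimes k}$ is a $G$-equivariant, weak-to-weak continuous map of $B_1$ into the unit ball of the Hilbert tensor power $\cH^{\otimes k}$, intertwining $\pi$ with the unitary representation $\pi^{\otimes k}$; thus the pushforward of $\xi$ is again $\mu$-stationary, and the barycenter paragraph applies to $b_k = \int_{B_1} u^{\otimes k}\, d\xi(u)$. Invariance of $b_k$ under $\pi^{\otimes k}(s)$ says exactly that every moment $\int \langle u,w_1\rangle\cdots\langle u,w_k\rangle\, d\xi(u)$ is unchanged upon replacing $\xi$ by $s_*\xi$. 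Since finite products of coordinate functionals form a point-separating, unital algebra of weakly continuous functions on the compact metrizable space $B_1$, Stone--Weierstrass shows these moments determine the measure, and I conclude $s_*\xi = \xi$ for all $s$.

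For the isometric factor I would form the second-moment operator $T = \int_{B_1} u\otimes u\, d\xi(u)$ (where $u\otimes u$ is the rank-one operator $w\mapsto \langle w,u\rangle u$), a positive trace-class operator with $\trace T = \int \|u\|^2\, d\xi \le 1$; invariance of $\xi$ gives $\pi(s)T\pi(s)^{-1} = T$, so $T$ commutes with the representation. Its nonzero spectral projections $P_\lambda$ then commute with $\pi(G)$ and have finite-dimensional $\pi(G)$-invariant ranges $V_\lambda$, while $\int \|P_0 u\|^2\, d\xi = \trace(P_0 T) = 0$ shows $\xi$ is carried by $\overline{\bigoplus_{\lambda>0} V_\lambda}$, so $\langle u,v\rangle = \sum_{\lambda>0}\langle P_\lambda u, P_\lambda v\rangle$ almost everywhere. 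If $\langle u,v\rangle$ is not $\xi\otimes\xi$-essentially constant, neither is some summand $\langle P_\lambda u, P_\lambda v\rangle$; fixing such a $\lambda$, put $V = V_\lambda$, let $\tau:G\to\mathrm{O}(V)$ be $\tau(g)=\pi(g)|_V$ and $K=\overline{\tau(G)}$, a compact second-countable group with $\tau(G)$ dense. The orthogonal projection $\beta = P_\lambda : B_1 \to V$ is continuous and $G$-equivariant, and $\zeta = \beta_*\xi$ is a $K$-invariant probability measure on $V$ that is not a point mass.

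The delicate step, which I expect to be the main obstacle, is recognising $(V,\zeta)$ as a genuine homogeneous space $(K/L, m_{K/L})$, and here I anticipate needing ergodicity of $(B_1,\xi)$ — precisely the setting in which Lemma \ref{3.2} is invoked, since the measure $\nu_f$ of Lemma \ref{3.1} is ergodic. (Without it the conclusion genuinely fails: for the trivial representation any non-Dirac $\xi$ is stationary and invariant with $\langle u,v\rangle$ non-constant, yet has no non-trivial isometric factor.) Granting ergodicity, $\zeta$ is an ergodic invariant measure for a continuous action of the compact group $K$, hence is supported on a single orbit $K v_0 \cong K/L$ with $L = \Stab_K(v_0)$ and coincides there with $m_{K/L}$. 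Then $\beta$ exhibits the non-trivial isometric $G$-space $(K/L, m_{K/L})$ as a factor of $(B_1,\xi)$, completing the proof.
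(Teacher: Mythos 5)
Your argument for the invariance of $\xi$ is essentially the paper's own: your barycenters $b_k=\int_{B_1} u^{\otimes k}\,d\xi(u)$ are exactly the moments $\sigma_k$ appearing in the paper, the equality case of the triangle inequality via strict convexity of balls in $\cH^{\otimes k}$ is the same step, and Stone--Weierstrass converts moment-invariance into invariance of $\xi$ in both proofs. For the isometric factor, however, you take a genuinely different route. The paper dispatches this part in a single sentence by citing Mackey's theorem on ergodic transformation groups with pure point spectrum, applied to the $G$-invariant, non-constant kernel $\Phi(u,v)=\langle u,v\rangle$; you instead reprove the needed special case of that theorem directly on $\cH$: the covariance operator $T=\int_{B_1} u\otimes u\,d\xi(u)$ is positive, trace-class and commutes with $\pi$, its nonzero spectral projections $P_\lambda$ are finite-rank, $G$-equivariant and weak-to-norm continuous, $\xi$ is carried by the closed span of their ranges, non-constancy of $\langle u,v\rangle$ forces $(P_\lambda)_*\xi$ to be non-Dirac for some $\lambda$, and then $K=\overline{\tau(G)}\subset\mathrm{O}(V_\lambda)$ together with the fact that ergodic invariant measures of continuous compact group actions are orbital measures identifies the factor as $(K/L,m_{K/L})$. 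The paper's route buys brevity; yours buys a self-contained construction that makes visible exactly where each hypothesis enters.

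In particular, your side remark about ergodicity is correct and important: Lemma \ref{3.2} as stated carries no ergodicity hypothesis, yet one is needed. Your trivial-representation example works: any non-Dirac $\xi$ (say, uniform on two orthogonal unit vectors) is automatically stationary and invariant with $\langle u,v\rangle$ non-constant, but a $G$-space on which $G$ acts trivially admits no non-trivial isometric factor, since equivariance of the factor map together with density of $\tau(G)$ in $K$ forces $L=K$. The paper's own proof tacitly requires the same hypothesis, since Mackey's theorem concerns \emph{ergodic} actions. This does not affect Theorem \ref{thm3}: in the only application, the measure $\nu_f$ produced by Lemma \ref{3.1} is ergodic, being the image of the ergodic space $(X,\nu)$ under a factor map, which is exactly the extra assumption you grant yourself. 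So your proof establishes the corrected statement, which is the statement actually used.
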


\begin{proof}[Proof of Lemma \ref{3.1}]
The assertion that $p_f$ defines a factor map into $(B_1,\nu_f)$ is easy, so what remains to prove is that $\Phi$ is not $\nu_f \otimes \nu_f$-almost everywhere equal to a constant $c$, or, what amounts to the same thing, the function 
$f_2$ on $X \times X$ defined by 
\[
f_2(x,z) := \Phi(p_f(x),p_f(z)) = \langle p_f(x), p_f(z) \rangle_{L^2_o(Y,\eta)} = \int_Y f(x,y) f(z,y) \, d\eta(y)
\]
is not $\nu \otimes \nu$-almost everywhere equal to $c$. We first claim that if  $f_2 = c$ almost everywhere, then
$c = 0$. Indeed, note that
\begin{equation}
\label{defc}
c = \int_X \int_X f_2(x,z) \, d\nu(x) d\nu(z) = \int_Y \Big( \int_X f(x,y) \, d\nu(x)  \Big)^2 \, d\eta(y).
\end{equation}
Since $f$ is $G$-invariant and $(X,\nu)$ is a $(G,\mu)$-space, the function
\[
\Lambda(y) = \int_X f(x,y) \, d\nu(x)
\]
satisfies 
\[
\sum_{s \in G} \Lambda(s y) \, \check{\mu}(s) 
= 
\sum_{s \in G} \Big( \int_X f(x,s^{-1}y) \, d\nu(x) \Big) \mu(s)
=
\sum_{s \in G} \Big( \int_X f(sx,y) \, d\nu(x) \Big) \mu(s) = \Lambda(y).
\]
By Lemma \ref{statinv}, applied to $\Lambda$ and the $(G,\check{\mu})$-space $(Y,\eta)$ - note that every probability measure preserving $G$-space is automatically a $(G,p)$-space for any probability measure $p$ on $G$ - we conclude 
that $\Lambda$ is $G$-invariant and thus essentially constant by ergodicity of $(Y,\eta)$. Since 
\[
\int_Y \Lambda(y) \, d\eta(y) = \int_X \int_Y f(x,y) \, d\nu(x) \, d\eta(y) = 0,
\]
by our assumption on $f$, we see that $\Lambda$ vanishes almost everywhere. From \eqref{defc}, we conclude that $c = 0$. Hence it suffices to prove that $f_2$ does not vanish $\nu \otimes \nu$-almost everywhere. Assume that $f_2(x,z) = 0$ almost everywhere, so that 
\[
\int_{X \times X} f_2(x,z) \psi(x) \psi(z) \, d\nu(x) \, d\nu(z) = \int_Y \Big| \int_X f(x,y) \psi(x) \, d\nu(x) \Big|^2 \, d\eta(y) = 0, 
\]
for all $\psi \in L^\infty(X,\nu)$. Then, 
\[
\int_X f(x,y) \, \psi(x) \, d\nu(x) = 0, \quad \textrm{for $\eta$-a.e. $y$ and for all $\psi \in L^\infty(X,\nu)$},
\]
which readily implies that $f$ vanishes identically, and this contradiction finishes the proof.
\end{proof}

\begin{proof}[Proof of Lemma \ref{3.2}]
We first prove that $\xi$ is $G$-invariant. Let $C(B_1)$ denote the real Banach space of continuous functions on $B_1$, equipped with the uniform norm and let $\cF \subset C(B_1)$ be the sub-algebra of $C(B_1)$ generated 
by the constant functions and all functions on the form
\[
\phi(v) = \langle u_1,v \rangle \cdots \langle u_k, v \rangle, \quad \textrm{for $v \in B_1$},
\]
where $u_1,\ldots,u_k$ range over all finite lists of vectors in $B_1$. Since $\cF$ separates points and does not 
vanish anywhere, we conclude by Stone-Weierstrass Theorem, that $\cF$ is uniformly dense in $C(B_1)$. For a 
fixed list $u_1,\ldots, u_k$ and with $\phi$ as above, we note that
\[
\int_{B_1} \phi(sv) \, d\xi(v) = \int_{B_1} \langle u_1, \pi(s)v \rangle \cdots \langle u_k, \pi(s)v \rangle \, d\xi(v)
= \langle u_1 \otimes \cdots \otimes u_k, \pi^{\otimes k}(s) \sigma_k \rangle_{\cH^{\otimes k}},
\]
where $\sigma_k = \int_{B_1} v \otimes \cdots \otimes v \, d\xi(v)$. Since $\xi$ is $\mu$-stationary and $u_1,\ldots, u_k$
are arbitrary, we conclude that 
\[
\sum_{s \in G} \mu(s) \, \pi^{\otimes k}(s)\sigma_k  = \sigma_k.
\]
Since norm balls in $\cH^{\otimes k}$ are strictly convex, we see that $\sigma_k$ is fixed by all $\pi^{\otimes k}(s)$
when $s$ ranges over $\supp \mu$. Since the support of $\mu$ is assumed to generate $G$, we conclude that 
$\sigma_k$ is fixed by $G$, and thus 
\[
\int_{B_1} \phi(sv) \, d\xi(v) = \int_{B_1} \phi(v) \, d\xi(v), \quad \textrm{for all $s \in G$ and $\phi \in \cF$}.
\] 
Since $\cF$ is uniformly dense in $C(B_1)$, we see that $\xi$ is $G$-invariant. \\

Since $\Phi(u,v) = \langle u, v \rangle$ is a $G$-invariant Borel function on $B_1 \times B_1$ which is not essentially
constant with respect to $\xi \otimes \xi$, there exists by Mackey's Theorem \cite{Ma}, a non-trivial isometric factor of $(B_1,\xi)$.  
\end{proof}

\section{Acknowledgements}

The author is very grateful to the careful referee who has spotted and corrected numerous misprints and inaccuracies in earlier pre-prints of this paper. The author also wishes to thank Uri Bader, Alex Furman, Eli Glasner, Tobias Hartnick, Vadim Kaimanovich, Anders Karlsson, Amos Nevo, Yuval Peres and Benjy Weiss for many enlightening and encouraging discussions on the topics of this paper.

\end{document}